\documentclass[12pt]{amsart}

\usepackage{amssymb}

\setlength{\textheight}{23cm}
\setlength{\textwidth}{16cm}
\setlength{\topmargin}{-0.8cm}
\setlength{\parskip}{0.3\baselineskip}
\hoffset=-1.4cm

\newtheorem{theorem}{Theorem}

\newtheorem{lemma}[theorem]{Lemma}
\newtheorem{proposition}[theorem]{Proposition}

\theoremstyle{definition}
\newtheorem{definition}[theorem]{Definition}

\theoremstyle{remark}

\numberwithin{equation}{section}

\newcommand{\dbar}  {\bar \partial}
\newcommand{\tensor}{\otimes}

\newcommand{\CC}{\mathbb{C}}
\newcommand{\RR}{\mathbb{R}}

\newcommand{\A}{\mathcal{A}}
\newcommand{\F}{\mathcal{F}}
\newcommand{\K}{\mathcal{K}}
\newcommand{\Q}{\mathcal{Q}}

\DeclareMathOperator{\ad}{ad}
\DeclareMathOperator{\At}{At}

\DeclareMathOperator{\End}{End}
\DeclareMathOperator{\Gal}{Gal}
\DeclareMathOperator{\Gr}{Gr}
\DeclareMathOperator{\Hom}{Hom}
\DeclareMathOperator{\id}{id}
\DeclareMathOperator{\pardeg}{par-deg}
\DeclareMathOperator{\ParEnd}{ParEnd}
\DeclareMathOperator{\parmu}{par-\mu}
\DeclareMathOperator{\Res}{Res}

\renewcommand{\leq}{\leqslant}

\hyphenation{ge-ne-ra-lized para-bolic poly-stable}

\begin{document}

\baselineskip=15pt

\title[Parabolic principal Higgs bundles]{Hermitian-Einstein
connections on polystable parabolic principal Higgs bundles}

\author[I. Biswas]{Indranil Biswas}

\address{School of Mathematics, Tata Institute of Fundamental
Research, Homi Bhabha Road, Bombay 400005, India}

\email{indranil@math.tifr.res.in}

\author[M. Stemmler]{Matthias Stemmler}

\email{stemmler@math.tifr.res.in}

\subjclass[2000]{53C07, 32L05, 14J60}

\keywords{Hermitian-Einstein connection, parabolic
Higgs $G$-bundle, ramified Higgs $G$-bundle}

\date{}

\begin{abstract}
Given a smooth complex projective variety $X$ and a smooth 
divisor $D$ on 
$X$, we prove the existence of Hermitian-Einstein connections, with 
respect to a Poincar\'e-type metric on $X \setminus D$, on polystable 
parabolic principal Higgs bundles with parabolic structure over $D$,
satisfying certain conditions on its restriction to $D$.
\end{abstract}

\maketitle

\section{Introduction}

The Hitchin-Kobayashi correspondence relating the stable vector bundles
and the solutions of the Hermitian-Einstein equation has turned out to
be extremely useful and important (see \cite{Do}, \cite{UY},
\cite{Si}). The Hitchin-Kobayashi correspondence has evolved into
a general principle finding generalizations to numerous contexts. 
Here we consider the parabolic Higgs $G$-bundles from this point
of view.

Parabolic vector bundles on curves were introduced by 
Seshadri \cite{Se77}. This was generalized to higher dimensional 
varieties by Maruyama and Yokogawa \cite{MY92}. Motivated by the 
characterization of 
principal bundles using Tannakian category theory given by Nori 
\cite{No76}, in \cite{BBN01}, parabolic principal bundles were defined.
Later ramified principal bundles were defined in
\cite{BBN03}; it turned out that there is a natural bijective 
correspondence between ramified principal bundles and parabolic 
principal bundles, cf.\ \cite{BBN03}, \cite{Bi06}. Higgs 
fields on ramified principal bundles were defined in \cite{Bi08}.

In \cite{Bq97}, Biquard considered vector bundles on a compact K\"ahler 
manifold $(X, \omega_0)$, with parabolic structure over a smooth divisor 
$D$, equipped with a Higgs field that has a logarithmic singularity on 
$D$. He showed that these data induce certain Higgs bundles (in an 
adapted sense) on $D$, which he calls ``sp\'ecialis\'es''. In the case 
of Higgs fields with nilpotent residue on $D$, these are just the graded
pieces of the parabolic filtration equipped with an induced Higgs
structure. Given a stable parabolic Higgs 
bundle such that these induced bundles are polystable and satisfy an 
additional condition on their slope, he proves the existence of a 
Hermitian-Einstein metric on $X \setminus D$ with respect to a 
Poincar\'e-type K\"ahler metric. The Hermitian-Einstein metric is
unique up to multiplication by a constant element of ${\mathbb R}^+$.

Our aim here is to extend Biquard's result to the case of 
parabolic principal Higgs $G$-bundles, where $G$ is a connected 
reductive linear algebraic group defined over $\CC$. Given such a bundle 
$(E_G, \theta)$, there is an adjoint parabolic Higgs vector bundle 
$(\ad(E_G), \ad(\theta))$. The Higgs field $\ad(\theta)$ has a 
nilpotent residue on 
$D$. This $\ad(\theta)$ induces Higgs fields on the
graded pieces $\Gr_\alpha \ad(E_G)$ for the parabolic vector
bundle $\ad(E_G)$. The Higgs field on $\Gr_\alpha \ad(E_G)$ 
induced by $\theta$ will be denoted by ${\rm ad}(\theta)_\alpha$.

Let $\psi\, :\, E_G\, \longrightarrow\, X$ be the natural projection.
The restriction of $\psi$ to $\psi^{-1}(D)$ will be denoted by
$\widehat\psi$. Let ${\mathcal K}$ be the trivial vector bundle
over $\psi^{-1}(D)$ with fiber $\text{Lie}(G)$. The group 
$G$ acts on ${\mathcal K}$ using the adjoint action of $G$
on $\text{Lie}(G)$. Define the invariant direct image
$$
{\mathcal E}\, :=\, (\widehat{\psi}_*{\mathcal K})^G\, ,
$$
which is a vector bundle over $D$. The Higgs field $\theta$ defines
a Higgs field on $\mathcal E$, which will be denoted by $\theta'$.

Fix a K\"ahler form $\omega_0$ on $X$ such that the corresponding
class in $H^2(X,\, {\mathbb R})$ is integral.

We obtain the following (see Theorem \ref{result} and 
Proposition \ref{converse}):

\begin{theorem}
Let $(E_G, \theta)$ be a parabolic Higgs $G$-bundle on $X$ 
such that $(E_G, \theta)$ is polystable with respect to
$\omega_0$, and satisfies the following two conditions:
\begin{itemize}
\item The Higgs bundle $({\mathcal E}\, ,\theta')$
on $D$ is polystable, and

\item for the graded pieces $(\Gr_\alpha \ad(E_G)\, , {\rm 
ad}(\theta)_\alpha)$ of $(\ad(E_G)\vert_D\, ,{\rm ad}(\theta)\vert_D)$
the condition
\begin{equation*}
  \mu(\Gr_\alpha \ad(E_G)) = - \alpha \deg(N)
\end{equation*}
holds, where degrees are computed using $\omega_0$ and $N$ is the 
normal bundle to $D$.
\end{itemize}
Then there is a Hermitian-Einstein connection on $E_G$ over
$X \setminus D$ with respect to the Poincar\'e-type metric.

Conversely, if there is such a Hermitian-Einstein connection 
satisfying the condition that
the induced connection on the adjoint vector bundle $\ad(E_G)|_{X \setminus D}$
lies in the space $\A$ (see \eqref{space of connections}), then
$(E_G, \theta)$ is polystable with respect to $\omega_0$.
\end{theorem}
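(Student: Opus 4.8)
The plan is to deduce both directions from the corresponding statements for the adjoint Higgs vector bundle $(\ad(E_G), \ad(\theta))$, using Biquard's existence theorem for parabolic Higgs vector bundles \cite{Bq97} as the analytic input. The bridge, in both directions, is the standard dictionary---valid because $G$ is reductive---between Hermitian-Einstein connections on $E_G$ and Hermitian-Einstein metrics on $\ad(E_G)$ that are compatible with the Lie algebra bundle structure: a reduction of structure group of $E_G$ to a maximal compact subgroup $K \subset G$ induces, via the Cartan involution and the Killing form, a metric on $\ad(E_G)$, and the induced connection on $\ad(E_G)$ is Hermitian-Einstein precisely when the connection on $E_G$ is.

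For the forward direction I would proceed in three preliminary steps. First, I would show that polystability of $(E_G, \theta)$ with respect to $\omega_0$ forces polystability of $(\ad(E_G), \ad(\theta))$ as a parabolic Higgs vector bundle; this is the principal-bundle-to-associated-bundle implication for the adjoint representation, which holds since $G$ is reductive and $\ad(\theta)$ is the Higgs field induced by $\theta$, with nilpotent residue on $D$. Second, I would check that the two hypotheses on $D$ are exactly the hypotheses of Biquard's theorem for $\ad(E_G)$: the sp\'ecialis\'es there are the graded pieces $(\Gr_\alpha \ad(E_G), \ad(\theta)_\alpha)$, the required slope condition is the hypothesized equality $\mu(\Gr_\alpha \ad(E_G)) = -\alpha \deg(N)$, and the polystability of each graded piece is obtained from the polystability of $(\mathcal{E}, \theta')$ via the identification of $\mathcal{E} = (\widehat{\psi}_* \K)^G$ with the induced Higgs data on $D$ attached to $\ad(E_G)$---this identification being where the reductive structure of $G$ again enters. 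Third, I would invoke Biquard's theorem to obtain a Hermitian-Einstein metric $h$ on $\ad(E_G)$ over $X \setminus D$ with respect to the Poincar\'e-type metric, unique up to an $\RR^+$-scalar on each polystable factor.

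The main obstacle is the final step: showing that $h$ may be chosen to arise from a reduction of $E_G$ to $K$, so that it yields a Hermitian-Einstein connection on $E_G$ itself. The metric produced by Biquard need not a priori respect the Lie bracket on $\ad(E_G)$. I would fix an arbitrary $C^\infty$ reduction of $E_G$ to $K$, giving a reference metric $h_0$ on $\ad(E_G)$ of the special fiberwise Cartan type; then $h$ and $h_0$ differ by a positive self-adjoint automorphism $s$ of $\ad(E_G)$, and the task is to show that $s$ can be taken to preserve the Lie algebra structure---equivalently, to lie in the image of $G$---so that $h$ again comes from a modified reduction to $K$. Here I would use the uniqueness of the Hermitian-Einstein metric up to scaling together with the equivariance of the whole construction under the $G$-action and the decomposition of $\mathrm{Lie}(G)$ into its center and simple factors: uniqueness forces $h$ to be permuted only by scalars under these symmetries, which pins $s$ down to the group part, the center being handled separately by a direct normalization. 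The resulting connection on $E_G$ is then Hermitian-Einstein because its induced connection on $\ad(E_G)$ is.

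For the converse, suppose there is a Hermitian-Einstein connection on $E_G$ over $X \setminus D$ whose induced connection on $\ad(E_G)|_{X \setminus D}$ lies in $\A$. Then $\ad(E_G)$ carries a Hermitian-Einstein connection, and membership in $\A$ guarantees the growth and integrability needed for Chern-Weil theory over the noncompact $X \setminus D$, so that parabolic degrees of $\ad(\theta)$-invariant subsheaves can be computed from the curvature. The standard Weitzenb\"ock estimate then shows that $(\ad(E_G), \ad(\theta))$ is polystable: every invariant subsheaf has parabolic slope at most $\mu(\ad(E_G))$, with equality producing an invariant orthogonal splitting. Finally, polystability of $(\ad(E_G), \ad(\theta))$ implies polystability of $(E_G, \theta)$ with respect to $\omega_0$, which completes the argument.
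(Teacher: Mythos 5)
Your overall architecture --- pass to the adjoint parabolic Higgs bundle $(\ad(E_G), \ad(\theta))$, verify Biquard's hypotheses, invoke his theorem, then descend the resulting connection to $E_G$ --- is exactly the paper's strategy, and your first three steps track the paper's (modulo two preliminary reductions the paper makes and you omit: first to $(E_G,\theta)$ \emph{stable}, via the Levi reduction built into Definition \ref{de-ps}, and then to $G$ \emph{semisimple}, by splitting through $Z_0(G)\times [G,G]\longrightarrow G\longrightarrow (G/Z_0(G))\times(G/[G,G])$ and handling the torus quotient $G/[G,G]$ by applying Theorem \ref{biquard} directly). But your resolution of what you correctly identify as the main obstacle --- that Biquard's metric on $\ad(E_G)$ need not respect the Lie bracket --- is where a genuine gap lies. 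The appeal to ``uniqueness of the Hermitian-Einstein metric together with equivariance of the construction under the $G$-action'' is not an argument: there is no global $G$-action on $\ad(E_G)$ or on its space of Hermitian metrics permuting Biquard's solutions ($\ad(E_G)$ is an associated bundle, not a trivial one), and uniqueness up to scalars does not by itself constrain your self-adjoint automorphism $s$ to preserve the bracket. The paper's actual mechanism is different and concrete: a connection on $\ad(E_G)|_{X\setminus D}$ is induced by a connection on $E_G$ if and only if the bracket tensor $\Phi\in H^0(X\setminus D,\,(\ad(E_G)\tensor\ad(E_G))^\ast\tensor\ad(E_G))$ is parallel --- a criterion valid only because $\mathrm{Lie}(\Aut(\mathfrak g))=\mathfrak g$ for semisimple $\mathfrak g$, which is precisely why the reduction to semisimple $G$ that you skip is indispensable: $\ad$ kills the center, so for general reductive $G$ no analysis of $\ad(E_G)$ alone can recover a connection on $E_G$, and your aside that ``the center is handled by a direct normalization'' does not substitute for the paper's group-theoretic splitting. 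Parallelism of $\Phi$ is then itself a nontrivial theorem (Lemma \ref{lem1}): a holomorphic section annihilated by the Higgs field of a parabolic-degree-zero polystable parabolic Higgs bundle carrying a Hermitian-Einstein connection is parallel, proved via the Galois cover, Simpson's theorem, the Biswas--Schumacher lemma, and the uniqueness clause of Theorem \ref{biquard}. None of this is recoverable from your sketch as written.

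Your converse also has an unproven final step. You propose: Chern--Weil over $X\setminus D$ (legitimate, since membership in $\A$ is what makes degrees computable, as in Biquard's Proposition 7.2) gives polystability of $(\ad(E_G),\ad(\theta))$, and then ``polystability of the adjoint implies polystability of $(E_G,\theta)$.'' That last implication is asserted, not proved, and it is not a formality: (poly)stability of a parabolic Higgs $G$-bundle is defined through the parabolic degrees of the line bundles $E_H(\chi)$ attached to reductions to parabolic subgroups, together with an admissibility condition, and passing back from the adjoint vector bundle --- again blind to the center of $G$ --- requires a genuine argument relating $\theta$-compatible reductions $E_H$ to $\ad(\theta)$-invariant subsheaves of $\ad(E_G)$ and comparing slopes. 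The paper sidesteps this entirely: given Biquard's degree comparison, the Ramanathan--Subramanian argument for ordinary principal bundles applies directly to reductions of $E_G$ itself and yields polystability of $(E_G,\theta)$ with respect to $\omega_0$, with no detour through polystability of the adjoint bundle.
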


\section{Parabolic Higgs bundles}\label{sec2}

Let $X$ be a connected smooth complex projective variety of complex 
dimension 
$n$, and let $D$ be a smooth reduced effective divisor on $X$. We first 
recall the definition of a parabolic Higgs vector bundle on $X$ with
parabolic structure over $D$.

A \textit{parabolic vector bundle} $E_\ast$ on $X$ with parabolic 
divisor $D$ is a holomorphic vector bundle $E$ on $X$ together with a 
parabolic structure on it, which is given by a decreasing filtration 
$\{F_\alpha(E)\}_{0 \leq \alpha \leq 1}$ of holomorphic 
subbundles of the restriction $E\vert_D$, which is
continuous from the left, satisfying the conditions that 
$F_0(E) \,=\, E\vert_D$ and $F_1(E) \,=\, 0$. The parabolic weights of 
$E_\ast$ are the numbers $0 \leq \alpha_1 < \cdots < \alpha_l < 1$ such
that $F_{\alpha_i + \varepsilon}(E) \neq F_{\alpha_i}(E)$ for all
$\varepsilon \,>\, 0$. For later use, we denote the graded pieces of 
this filtration as
\begin{equation}\label{egr}
  \Gr_\alpha E \,:= \, F_\alpha(E)/F_{\alpha + \varepsilon}(E), \quad 
\alpha 
\in \{ \alpha_1, \ldots, \alpha_l \}, \quad \varepsilon > 0 \,
\text{~sufficiently~small}.
\end{equation}
Let $\ParEnd(E_\ast)$ be the sheaf of holomorphic sections of 
$\End(E)\,=\, E\otimes E^\ast$ 
which preserve the above filtration of $E\vert_D$. Let $\Omega_X^k(\log 
D)$ be the vector bundle on $X$ defined by the sheaf of logarithmic 
$k$-forms. Note that there is a residue homomorphism
\[
  \Res_D: \ParEnd(E_\ast) \tensor \Omega_X^1(\log D)\,\longrightarrow\,
\ParEnd(E_\ast)|_D
\]
defined by the natural residue homomorphism
$\Omega_X^1(\log D)\,\longrightarrow\, {\mathcal O}_D$.

\begin{definition}
A {\em parabolic Higgs vector bundle} with parabolic divisor $D$ is a 
pair $(E_\ast, \theta)$ consisting of a parabolic vector bundle $E_\ast$ on $X$ with parabolic divisor $D$ and a section
\[
  \theta \in H^0(X, \ParEnd(E_\ast) \tensor \Omega_X^1(\log D))\, ,
\]
called the {\em Higgs field}, such that the following two conditions 
are satisfied:
\begin{itemize}
\item $\theta \bigwedge \theta \in H^0(X, \ParEnd(E_\ast) \tensor 
\Omega_X^2(\log D))$ vanishes identically, where the multiplication is 
defined using the Lie algebra structure of the fibers of $\End(E)$, and
the exterior product $\Omega_X^1(\log D)\otimes 
\Omega_X^1(\log D)\, \longrightarrow\, \Omega_X^2(\log D)$, and

\item the residue $\Res_D(\theta)$ is nilpotent with respect to the parabolic filtration in the sense that
\[
  \Res_D(\theta)(F_\alpha(E)) \subset F_{\alpha + \varepsilon}(E)
\]
for some $\varepsilon > 0$.
\end{itemize}
\end{definition}

In the following, we will omit the subscript ``$\ast$'' in $E_\ast$,
and denote a 
parabolic vector bundle by the same symbol as its underlying bundle.

Now we will recall the definitions of ramified Higgs
principal bundles and parabolic Higgs 
principal bundles. For this, let $G$ be a connected reductive
linear algebraic group defined over $\CC$.

\begin{definition}
A {\em ramified $G$-bundle\/} over $X$ with ramification over $D$ is a smooth complex variety $E_G$ equipped with an algebraic right action of $G$
\[
  f: E_G \times G \longrightarrow E_G
\]
and a surjective algebraic map
\[
  \psi\,:\, E_G \,\longrightarrow \,X\, ,
\]
such that the following conditions are satisfied:
\begin{itemize}
\item $\psi \circ f = \psi \circ p_1$, where $p_1: E_G \times G \to 
E_G$ denotes the natural projection,

\item for each point $x \in X$, the action of $G$ on the reduced fiber 
$\psi^{-1}(x)_{\text{red}}$ is transitive,

\item the restriction of $\psi$ to $\psi^{-1}(X \setminus D)$ makes $\psi^{-1}(X \setminus D)$ a principal $G$-bundle over $X \setminus D$,
  meaning the map $\psi$ is smooth over $\psi^{-1}(X \setminus D)$ 
and the map to the fiber product
  \[
    \psi^{-1}(X \setminus D) \times G \longrightarrow \psi^{-1}(X \setminus D) \times_{X \setminus D} \psi^{-1}(X \setminus D)
  \]
  given by $(z, g) \longmapsto \big(z, f(z, g)\big)$ is an isomorphism,
\item the reduced inverse image $\psi^{-1}(D)_{\text{red}}$ is a smooth divisor on $E_G$, and
\item for each point $z \in \psi^{-1}(D)_{\text{red}}$, the isotropy 
group $G_z \subset G$ for the action of $G$ on $E_G$ is a finite cyclic 
group acting faithfully on the quotient line $T_z E_G/T_z 
(\psi^{-1}(D)_{\text{red}})$.
\end{itemize}
\end{definition}

Parabolic principal $G$-bundles were defined in \cite{BBN01} as 
functors from the category of rational $G$-representations to the 
category of parabolic vector bundles, satisfying certain conditions; 
this definition was modeled on \cite{No76}. 
There is a natural bijective correspondence between the ramified 
principal $G$-bundles with ramification over $D$ and parabolic principal 
$G$-bundles on $X$ with $D$ as the parabolic divisor (\cite{BBN03}, 
\cite{Bi06}). Let us briefly recall a construction of
parabolic principal $G$-bundles from ramified principal $G$-bundles.

Let $E_G$ be a ramified $G$-bundle on 
$X$ with ramification over $D$. There is a finite (ramified) Galois covering
\[
  \eta\,:\, Y \,\longrightarrow \, X
\]
such that the normalizer
\begin{equation}\label{en}
F_G \,:=\, \widetilde{E_G \times_X Y}
\end{equation}
of the 
fiber product $E_G \times_X Y$ is smooth. Write $\Gamma 
\,:=\,\Gal(\eta)$ 
for the Galois group of $\eta$. Let
\begin{equation}\label{h}
h\, :\, \Gamma\, \longrightarrow\, \text{Aut}(Y)
\end{equation}
be the homomorphism giving the action of $\Gamma$ on $Y$.
The projection $F_G\,\longrightarrow \, 
Y$ yields a $\Gamma$-linearized principal $G$-bundle on $Y$ in the 
following sense:

\begin{definition}
A {\em $\Gamma$-linearized principal $G$-bundle\/} on $Y$ is a 
principal $G$-bundle
\[
  \psi\,:\, F'_G \,\longrightarrow \,Y
\]
together with a left action of $\Gamma$ on $F'_G$
\[
  \rho\,:\, \Gamma \times F'_G \,\longrightarrow \,F'_G
\]
such that the following two conditions are satisfied:
\begin{itemize}
\item The actions of $\Gamma$ and $G$ on $F'_G$ commute, and
\item $\psi(\rho(\gamma, z)) = h(\gamma)(\psi(z))$ for all $(\gamma, z) 
\in \Gamma \times F'_G$, and $h$ is defined in \eqref{h}.
\end{itemize}
\end{definition}

Consider $F_G$ constructed in \eqref{en}.
Given a finite-dimensional complex $G$-module $V$, there is the 
associated $\Gamma$-linearized vector bundle $F_G(V)\,=\, F_G\times^G
V$ on $Y$ with fibers
isomorphic to $V$. This $F_G(V)$ in turn corresponds to a 
parabolic vector bundle 
on $X$ with $D$ as the parabolic divisor, cf.\ \cite{Bi97b}; this
parabolic vector bundle will be denoted by $E_G(V)$.

The earlier mentioned functor, from the category of rational 
$G$-representations to the category of parabolic vector bundles,
associated to the ramified $G$-bundle $E_G$ sends any $G$-module $V$
to the parabolic vector bundle $E_G(V)$ constructed above.

In the following, we will identify the notions of parabolic and 
ramified $G$-bundles.

Let $\mathfrak{g}$ be the Lie algebra of $G$; it is equipped with the 
adjoint action of $G$. Setting $V\,=\, \mathfrak g$, the parabolic 
vector bundle $E_G(\mathfrak{g})$ constructed as above is called 
the adjoint parabolic vector bundle of $E_G$, and it is denoted 
by $\ad(E_G)$.

Let $E_G$ be a ramified $G$-bundle over $X$ with ramification over $D$. 
Let
\begin{equation}\label{cK}
\K \,\subset\, TE_G
\end{equation}
be the holomorphic subbundle defined by the tangent space of the orbits 
of the 
action of $G$ on $E_G$; since all the isotropies, for the action of $G$ 
on $E_G$, are finite groups, $\K$ is indeed a subbundle. Note that $\K$ 
is identified with the trivial 
vector bundle over $E_G$ with fiber $\mathfrak{g}$. Let
$$
\Q\, :=\, TE_G/\K
$$
be the quotient vector bundle. The action of $G$ on $E_G$ 
induces an action of $G$ on the tangent bundle $TE_G$, which preserves 
the subbundle $\K$. Therefore, there is an induced action of $G$ on the
quotient bundle $\Q$. These actions in turn induce a linear action of 
$G$ on $H^0(E_G, \K \tensor \Q^\ast)$.
Combining the exterior algebra structure of $\Lambda \Q^\ast$ and the Lie algebra structure on the fibers of $\K = E_G \times \mathfrak{g}$, one obtains a homomorphism
\[
 \tau\,:\, (\K \tensor \Q^\ast) \tensor (\K \tensor \Q^\ast) 
\,\longrightarrow \, \K \tensor \Lambda^2 \Q^\ast\, .
\]
For $y\, \in\, E_G$, and $a\, , b\, \in\, (\K \tensor \Q^\ast)_y$, the
image $\tau(a\otimes b)$ will also be denoted by $a \bigwedge b$.
\begin{definition}\label{de5}
\mbox{}
\begin{enumerate}
\item A {\em Higgs field\/} on $E_G$ is a section
\[
 \theta \,\in\, H^0(E_G, \K \tensor \Q^\ast)
\]
such that
\begin{itemize}
\item $\theta$ is invariant under the action of $G$ on $H^0(E_G, \K \tensor \Q^\ast)$, and
\item $\theta \bigwedge \theta = 0$.
\end{itemize}
\item A {\em parabolic Higgs $G$-bundle\/} is a pair $(E_G, \theta)$ consisting of a parabolic $G$-bundle $E_G$ and a Higgs field $\theta$ on $E_G$.
\end{enumerate}
\end{definition}

Now let $H\,\subset\, G$ be a Zariski closed subgroup, and let 
$U\,\subset\, 
X$ be a Zariski open subset. The inverse image $\psi^{-1}(U) \subset 
E_G$ will be denoted by $E_G|_U$; as before, $\psi$ is the projection
of $E_G$ to $X$.

\begin{definition}
A {\em reduction of structure group of $E_G$ to $H$ over $U$\/} is a subvariety
\[
  E_H \,\subset\, E_G|_U
\]
satisfying the following conditions:
\begin{itemize}
\item $E_H$ is preserved by the action of $H$ on $E_G$,
\item for each point $x \in U$, the action of $H$ on $\psi^{-1}(x) \bigcap E_H$ is transitive, and
\item for each point $z \in E_H$, the isotropy subgroup $G_z$, for the 
action of $G$ on $E_G$, is contained in $H$. 
\end{itemize}
\end{definition}

Clearly, such an $E_H$ is a ramified $H$-bundle over $U$. Let
\begin{equation} \label{reduction}
  \iota\,:\, E_H \,\longrightarrow\, E_G|_U
\end{equation}
be a reduction of structure group of $E_G$ to $H$ over $U$. Define the 
bundles $\K_H$ and $\Q_H$ as before with respect to $E_H$ (in place of 
$E_G$). Then by \cite[(3.8)]{Bi08}, 
\[
  \Hom(\Q_H, \K_H) \,\subset\, \iota^\ast \Hom(\Q, \K)\, .
\]
Let $\theta \in H^0(E_G, \Hom(\Q, \K))$ be a Higgs field on $E_G$.

\begin{definition}
The reduction $E_H$ in \eqref{reduction} is said to be {\em compatible\/} with the Higgs field 
$\theta$ if
\[
  \theta|_{E_H} \in H^0(E_H, \Hom(\Q_H, \K_H)) \subset H^0(E_H, \iota^\ast \Hom(\Q, \K))\, .
\]
\end{definition}

Fix a very ample line bundle $\zeta$ on $X$. Define the degree $\deg 
\F$ (respectively, the parabolic degree 
$\pardeg E_\ast$) of a torsion-free coherent sheaf $\F$ (respectively,
a parabolic vector bundle $E_\ast$) on $X$ with respect to this
polarization $\zeta$.

Fix a basis of $H^0(X, \, \zeta)$. Using this basis we get an embedding 
of $X$ in ${\mathbb C}{\mathbb P}^{N-1}$, where $N\, =\, \dim
H^0(X, \, \zeta)$. Let $\omega_0$ be the restriction to $X$
of the Fubini-Study metric on ${\mathbb C}{\mathbb P}^{N-1}$.

Let $H$ be a parabolic subgroup of $G$. Then $G/H$ is a complete 
variety, and the quotient map $G \longrightarrow G/H$ defines a 
principal $H$-bundle over $G/H$. For any character $\chi$ of $H$, let
\[
  L_{\chi} \,\longrightarrow \, G/H
\]
be the line bundle associated to this principal $H$-bundle for the 
character $\chi$.
Let $R_u(H)$ be the unipotent radical of $H$ (it is the unique maximal
normal unipotent subgroup). The group $H/R_u(H)$ is called the
\textit{Levi quotient} of $H$. There are subgroups $L(H)\, 
\subset\, H$ such 
that the composition $L(H)\, \hookrightarrow\, H\,\longrightarrow\, 
H/R_u(H)$ is an isomorphism. Such a subgroup $L(H)$ is called a 
\textit{Levi subgroup} of $H$. Any two Levi subgroups of $H$ are 
conjugate by some element of $H$.

Let $Z_0(G) \,\subset\, G$ be the connected component, containing the 
identity element, of the center of $G$. It is known that $Z_0(G) \,
\subset\, H$. A character $\chi$ of $H$ which is trivial on $Z_0(G)$ is 
called {\em strictly antidominant\/} if the corresponding line bundle
$L_\chi$ over $G/H$ (defined above) is ample.

\begin{definition}
A parabolic Higgs $G$-bundle $(E_G, \theta)$ is called {\em stable\/} if 
for every quadruple $(H, \chi, U, E_H)$, where
\begin{itemize}
\item $H \subset G$ is a proper parabolic subgroup,
\item $\chi$ is a strictly antidominant character of $H$,
\item $U \subset X$ is a non-empty Zariski open subset such that the codimension of $X \setminus U$ is at least two, and
\item $E_H \subset E_G|_U$ is a reduction of structure group of $E_G$ to $H$ over $U$ compatible with~$\theta$,
\end{itemize}
the following holds:
\[
   \pardeg(E_H(\chi)) \,>\, 0\, ,
\]
where $E_H(\chi)$ is the parabolic line bundle over $U$ associated to 
the parabolic $H$-bundle $E_H$ for the one-dimensional representation 
$\chi$ of $H$.
\end{definition}

Let $E_G$ be a parabolic $G$-bundle over $X$. A reduction of structure 
group $E_H \subset E_G$ to some parabolic subgroup $H \subset G$ is 
called {\em admissible\/} if for each character $\chi$ of $H$ which 
is trivial on $Z_0(G)$, the associated parabolic line bundle 
$E_H(\chi)$ over $X$ satisfies the following condition:
\[
	\pardeg(E_H(\chi)) \,=\, 0\, .
\]

\begin{definition}\label{de-ps}
A parabolic Higgs $G$-bundle $(E_G, \theta)$ is called {\em 
polystable\/} if either $(E_G, \theta)$ is stable, or there is a proper 
parabolic subgroup $H \subset G$ and a reduction of structure group
\[
	E_{L(H)} \,\subset\, E_G
\]
of $E_G$ to a Levi subgroup $L(H)\, \subset\, H$ over $X$ such that the 
following conditions are satisfied:
\begin{itemize}
\item The reduction $E_{L(H)} \subset E_G$ is compatible with $\theta$,
\item the parabolic Higgs $L(H)$-bundle $(E_{L(H)}, \theta|_{E_{L(H)}})$ is stable (from the first condition it follows that $\theta|_{E_{L(H)}}$ is a Higgs field on $E_{L(H)}$), and
\item the reduction of structure group of $E_G$ to $H$, obtained by extending the structure group of $E_{L(H)}$ using the inclusion of $L(H)$ in $H$, is admissible.
\end{itemize}
\end{definition}

\section{Hermitian-Einstein connection on a parabolic Higgs 
$G$-bundle}\label{sec3}

Let $E_G$ be a parabolic $G$-bundle over $X$. Let
\begin{equation} \label{atiyah}
  0 \longrightarrow \ad(E_G) \longrightarrow \At(E_G) \longrightarrow TX \longrightarrow 0
\end{equation}
be the Atiyah exact sequence for the $G$-bundle $E_G$ over $X \setminus 
D$. Recall that a {\em complex connection\/} on $E_G$ over $X \setminus 
D$ is a $C^\infty$ splitting of this exact sequence. Fix a maximal 
compact subgroup $K \subset G$. A complex connection on $E_G$ over $X
\setminus D$ is called {\em unitary\/} if it is induced by a 
connection on a smooth reduction of structure group $E_K$ of $E_G$ to 
$K$ over $X \setminus D$. Note that \eqref{atiyah} is a short
exact sequence of sheaves of Lie algebras. For a complex unitary 
connection $\nabla$ on $E_G$ over $X \setminus D$, its {\em curvature 
form}
\[
  F\,\in\, H^0(X \setminus D,\, \Lambda^{1,1} T^\ast X \tensor \ad(E_G))
\]
measures the obstruction of the splitting of \eqref{atiyah} 
defining $\nabla$ to be Lie algebra structure preserving; see 
\cite{At} for the details.

For a parabolic Higgs $G$-bundle $(E_G, \theta)$ on $X$, its 
restriction to $X \setminus D$ is a Higgs $G$-bundle in the
usual sense. Given a smooth reduction of structure group $E_K$ of $E_G$ 
to a maximal compact subgroup $K \subset G$ over $X \setminus D$, the 
Cartan involution of $\mathfrak{g}$ with respect to $K$ induces an 
involution of the adjoint vector bundle $\ad(E_G)$ over $X \setminus 
D$; this involution of $\ad(E_G)$ will be denoted by $\phi$. 
Writing $\theta = \sum_i \theta_i dz^i$ in local holomorphic coordinates 
$z^1, \ldots, z^n$ on $X$ around a point $x \in X \setminus D$, define
\[
  \theta^\ast \,:=\, - \sum_i \phi(\theta_i) d\bar z^i\, .
\]
This definition is clearly independent of the choice of local 
coordinates.

Let $\mathfrak{z}$ be the center of the Lie algebra $\mathfrak{g}$ of 
$G$. Since the adjoint action of $G$ on $\mathfrak{z}$ is trivial,
an element $\lambda \in \mathfrak{z}$ defines a smooth 
section of $\ad(E_G)$ over $X \setminus D$, which will also be denoted 
by $\lambda$.

\begin{definition}\label{dhe}
Let $(E_G, \theta)$ be a parabolic Higgs $G$-bundle on $X$. A complex 
unitary connection on $E_G$ over $X \setminus D$ is called a {\em 
Hermitian-Einstein connection\/} with respect to a K\"ahler metric 
$\omega$ on $X \setminus D$ and the Higgs field $\theta$, if its 
curvature form $F$ satisfies the equation
\[
\Lambda_\omega(F + [\theta, \theta^\ast]) \,= \,\lambda
\]
for some $\lambda \in \mathfrak{z}$, where the operation $[{\cdot}, 
{\cdot}]$ is defined using the exterior product on forms and the Lie 
algebra structure of the fibers of $\ad(E_G)$.
\end{definition}

Note that $\lambda$ in Definition \ref{dhe} lies in
$\mathfrak{z}\bigcap \text{Lie}(K)$.

In \cite{Bq97}, Biquard introduces a Poincar\'e-type metric on $X 
\setminus D$ as follows: Let $\sigma$ be the canonical section of 
the line bundle ${\mathcal O}_X(D)$ on $X$ associated to the 
divisor $D$, meaning $D$ is the 
zero divisor of $\sigma$. Let $\omega_0$ be the K\"ahler form
on $X$ that we fixed earlier. Choose a Hermitian metric $||{\cdot}||$ on 
the fibers of ${\mathcal O}_X(D)$. Then
\begin{equation}\label{om}
  \omega \,:=\, T \omega_0 - \sqrt{-1} \partial \dbar \log \log^2 
||\sigma||^2
\end{equation}
defines a K\"ahler metric on $X \setminus D$ for $T\,\in\,
{\mathbb R}^+$ large enough.

In \cite{Bq97}, Biquard proves the existence of 
Hermitian-Einstein metrics on stable parabolic Higgs vector bundles 
under certain additional conditions (see \cite[Th\'eor\`eme 8.1]{Bq97}). 
In his definition of parabolic 
Higgs vector bundles he does not require the residue of the Higgs field 
to be nilpotent.

Let $(E\, , \theta)$ be a parabolic Higgs vector bundle.
Consider the graded pieces $\Gr_\alpha E$ in \eqref{egr}. Let
$$
\theta_{\alpha}\, :=\,\theta\vert_D\, :\,\Gr_\alpha E\,
\longrightarrow\, (\Gr_\alpha E)\otimes (\Omega^1_X(\log D)\vert_D)
$$
be the homomorphism given by $\theta$. Since the residue of $\theta$ is 
nilpotent with respect to the quasi-parabolic filtration of
$E\vert_D$, the composition
$$
\Gr_\alpha E\, \stackrel{\theta_\alpha}{\longrightarrow}\,
(\Gr_\alpha E)\otimes (\Omega^1_X(\log D)\vert_D)
\, \stackrel{{\rm Id}\otimes{\rm Res}}{\longrightarrow}\,
\Gr_\alpha E\otimes {\mathcal O}_D \,=\, \Gr_\alpha E
$$
vanishes identically. Therefore, $\theta_{\alpha}\, \in\,
H^0(D,\, \text{End}(\Gr_\alpha E)\otimes \Omega^1_D)$.
The integrability
condition $\theta\bigwedge\theta\,=\, 0$ immediately implies that
$\theta_\alpha\bigwedge\theta_\alpha\,=\, 0$. Therefore,
$(\Gr_\alpha E\, , \theta_\alpha)$ is a Higgs vector bundle on $D$.

In \cite[pp.\ 47--48]{Bq97}, Biquard uses the parabolic structure of $E$
to construct a background metric on $E$ over $X \setminus D$.
Let $\nabla$ be the corresponding Chern connection. He then restricts
his attention to connections lying in the space
\begin{equation} \label{space of connections}
  \A := \big\{ \nabla + a : a \in \widehat C_\delta^{1 + \vartheta}(\Omega_X^1 \tensor \End(E)) \big\}
\end{equation}
(see \cite[p.\ 58 and p.\ 70]{Bq97}), where the H\"older space
$\widehat C_\delta^{1 + \vartheta}(\Omega_X^1 \tensor \End(E))$ 
is defined in \cite[pp.\ 53--54]{Bq97}. Let
$$
N\, \longrightarrow\, D
$$
be the normal line bundle of the divisor $D$.

With these definitions, Biquard's theorem can be formulated as 
follows:

\begin{theorem}\label{biquard}
Let $(E, \theta)$ be a stable parabolic Higgs vector bundle on $X$ with 
parabolic divisor $D$. Assume that all
the graded Higgs bundles $(\Gr_\alpha E, \theta_\alpha)$ are polystable
and satisfy the condition
\begin{equation} \label{graded slope (biquard)}
  \mu(\Gr_\alpha E) = \parmu(E) - \alpha \deg(N)
\end{equation}
with respect to $\omega_0$. Then there is a Hermitian metric $h$ on $E$
over $X \setminus D$, with Chern connection in $\A$, which is 
Hermitian-Einstein with respect to the 
Poincar\'e-type metric $\omega$, meaning its Chern curvature form 
$F$ satisfies
\[
  \sqrt{-1} \Lambda_\omega(F + [\theta, \theta^\ast]) \,=\, 
\lambda\cdot\id_E
\]
for some $\lambda \in \RR$.

Such a Hermitian metric is unique up to a constant scalar multiple.
\end{theorem}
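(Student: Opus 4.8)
The plan is to prove this by the Hitchin--Kobayashi method for Higgs bundles, adapted to the complete non-compact K\"ahler manifold $(X \setminus D\, ,\omega)$. Since $D$ is smooth projective and the graded Higgs bundles $(\Gr_\alpha E\, ,\theta_\alpha)$ are polystable by hypothesis, I would first invoke the Higgs version of the Donaldson--Uhlenbeck--Yau correspondence on the compact base $D$ (see \cite{Si}) to obtain, for each weight $\alpha$, a Hermitian--Einstein metric $h_\alpha$ on $\Gr_\alpha E$ with Einstein constant $\mu(\Gr_\alpha E)$. These metrics are meant to serve as the transverse model for the desired solution near the divisor: in the Poincar\'e coordinate transverse to $D$ the metric $\omega$ has cusp-type geometry, and the weight-$\alpha$ piece of the parabolic filtration corresponds to a definite decay rate, so that $\bigoplus_\alpha h_\alpha$ prescribes the asymptotic behavior. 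The slope condition \eqref{graded slope (biquard)} is exactly what forces the Einstein constants of these pieces to line up, after the shift $-\alpha\deg(N)$ produced by the curvature of the Poincar\'e metric in the normal direction, with a single global constant $\lambda$.

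Next I would build the background metric of \cite{Bq97} on $E$ over $X \setminus D$ so that it decays to $\bigoplus_\alpha h_\alpha$ in the transverse direction, and let $\nabla$ be its Chern connection, giving the affine space $\A$ of \eqref{space of connections}. Writing a candidate metric as $h = h_0\, e^s$ with $s$ a self-adjoint endomorphism in the weighted H\"older space of \eqref{space of connections}, the problem becomes the quasilinear elliptic equation $\sqrt{-1}\,\Lambda_\omega(F_{h_0 e^s} + [\theta, \theta^\ast]) = \lambda\cdot\id_E$ for $s$. I would solve this by the continuity method (equivalently, the nonlinear heat flow) along a one-parameter family joining a manifestly solvable equation to the target one; openness follows from the invertibility of the linearized operator, a Laplace-type operator that is Fredholm on the chosen weighted H\"older spaces once the weights are calibrated to the model metrics $h_\alpha$.

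The main obstacle is the a priori $C^0$-estimate on $s$ needed for closedness, and this is where the non-compactness of $X \setminus D$ bites. On a compact manifold one bounds $\sup|s|$ against the $L^1$-norm of the Einstein tensor through an integration-by-parts identity; here the same identity must be run on a complete manifold of infinite volume, and the boundary contributions at the cusp have to be shown to vanish. Controlling these terms uses the decay of $h_0$ toward the model metrics $\bigoplus_\alpha h_\alpha$, and it is precisely the slope hypothesis \eqref{graded slope (biquard)} that makes the boundary integral vanish rather than obstruct the estimate; without it the continuity path would leave the allowed function space near $D$. Once the estimate holds uniformly along the family, a limiting solution exists and its Chern connection lies in $\A$ by construction.

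Finally, stability is used both to guarantee convergence and to pin down uniqueness. Should the continuity method fail, the weak subsheaf construction of Uhlenbeck--Yau (\cite{UY}) would produce a $\theta$-invariant weakly holomorphic subsheaf of $E$; one checks that it gives a parabolic Higgs subsheaf whose parabolic slope contradicts the stability of $(E\, ,\theta)$. For uniqueness, two Hermitian--Einstein metrics differ by a positive self-adjoint automorphism $u$, and subtracting their equations yields an elliptic differential inequality for $\mathrm{tr}(u)$; the maximum principle on the complete manifold, together with the controlled behavior at the cusp, forces $u$ to be parallel, whereupon stability of $(E\, ,\theta)$ forces $u$ to be a constant scalar.
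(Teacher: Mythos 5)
This theorem is not proved in the paper at all: it is Biquard's Th\'eor\`eme 8.1, quoted from \cite{Bq97} and used as a black box in the proof of Theorem \ref{result}. So the comparison here is with Biquard's original argument, and at the level of strategy your sketch reconstructs it faithfully: Biquard does build an initial metric from Hermitian--Einstein metrics on the induced polystable Higgs bundles over $D$ (his ``sp\'ecialis\'es'', which for nilpotent residue are the graded pieces $(\Gr_\alpha E, \theta_\alpha)$), with the weight $\alpha$ entering through a factor behaving like $\|\sigma\|^{2\alpha}$ whose curvature produces exactly the shift $-\alpha\deg(N)$ in \eqref{graded slope (biquard)}; he develops Fredholm theory for the linearization on the weighted H\"older spaces $\widehat C^{1+\vartheta}_\delta$ underlying \eqref{space of connections}; he solves globally using stability, and gets uniqueness up to scale. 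Two inessential deviations: Biquard follows Simpson's scheme (heat flow/minimization of the Donaldson functional, with the destabilizing Higgs subsheaf extracted \`a la \cite{Si88}) rather than a literal continuity path with the Uhlenbeck--Yau regularization of \cite{UY}, and the Poincar\'e-type metric $\omega$ has \emph{finite} volume, not infinite volume as you assert --- the cusp $\frac{|dz|^2}{|z|^2\log^2|z|^2}$ integrates; this finiteness is in fact what makes the analytic degree finite and Simpson's noncompact framework applicable, so your worry is misplaced but harmless.

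The one place where your sketch waves at the genuinely hard, non-routine content is the closing move of the stability argument: ``one checks that it gives a parabolic Higgs subsheaf whose parabolic slope contradicts the stability of $(E,\theta)$.'' That check is the crux. The weakly holomorphic $\theta$-invariant subsheaf produced on $X \setminus D$ must be extended across $D$ as a \emph{parabolic} subsheaf, and its $\omega$-analytic degree (computed with a metric whose Chern connection lies in $\A$) must be identified with its parabolic degree with respect to $\omega_0$; this is Biquard's Proposition 7.2, which the present paper itself quotes separately to prove Proposition \ref{converse}, and it is precisely where the weighted asymptotics of $\A$ are indispensable. A complete proof along your lines would have to supply this degree identification and the Fredholm/decay estimates it rests on; as a blind outline, though, your proposal is the right roadmap and matches the proof that actually exists in \cite{Bq97}.
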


\section{Existence of Hermitian-Einstein connection}

Let $(E_G\, ,\theta)$ be a ramified Higgs $G$-bundle. Let
$$
\psi\, :\, E_G\,\longrightarrow\, X
$$
be the natural projection. The reduced divisor
$\psi^{-1}(D)_{\rm red}$ will be denoted by $\widetilde{D}$.
Let
$$
\widehat{\psi}\, :=\, \psi\vert_{\widetilde{D}}\, :\, \widetilde{D}
\, \longrightarrow\, D
$$
be the restriction. Consider the subbundle $\mathcal K$ defined
in \eqref{cK}. The action of the group $G$ on $\widetilde{D}$
produces an action of $G$ on the direct image
$\widehat{\psi}_*{\mathcal K}\, \longrightarrow\, D$. Define
the invariant part
\begin{equation}\label{cE}
{\mathcal E} \, :=\, (\widehat{\psi}_*{\mathcal K})^G
\, \longrightarrow\, D\, ;
\end{equation}
it is a vector bundle over $D$.

We will give an explicit description of the vector bundle $\mathcal E$.
As before, the isotropy subgroup of any $z\, \in\, 
\widetilde{D}$, for the action of $G$ on $\widetilde{D}$, will be 
denoted by $G_z$. Let
$$
{\mathfrak g}_z\, :=\, {\mathfrak g}^{G_z}\, \subset\,
{\mathfrak g}
$$
be the space of invariants for the adjoint action of $G_z$. This
${\mathfrak g}_z$ is clearly a subalgebra of ${\mathfrak g}$. The elements
of $G_z$ are semisimple because $G_z$ is a finite group. Since $G_z$
is cyclic, the Lie subalgebra ${\mathfrak g}_z$
is reductive (see \cite[p.\ 26, Theorem]{Hu}). Let
${\mathcal S}$ be the subbundle of the trivial vector bundle
$\widetilde{D}\times {\mathfrak g}\, \longrightarrow\, \widetilde{D}$
whose fiber over any $z\, \in\, \widetilde{D}$ is the subalgebra
${\mathfrak g}_z$. The action of $G$ on $\widetilde{D}$ and the adjoint
action of $G$ on $\mathfrak g$ combine together to define an 
action of $G$
on $\widetilde{D}\times {\mathfrak g}$; the identification
between ${\mathcal K}\vert_{\widetilde{D}}$ and 
$\widetilde{D}\times {\mathfrak g}$ commutes with the actions of 
$G$ . The action of $G$ on
$\widetilde{D}\times {\mathfrak g}$ clearly preserves the subbundle
$\mathcal S$. We have
\begin{equation}\label{q}
D\, =\, \widetilde{D}/G ~\, \text{~and~}\, ~
{\mathcal E} \, =\, {\mathcal S}/G\, .
\end{equation}
That ${\mathcal S}/G$ is a vector bundle over $\widetilde{D}/G$
follows from the fact that the isotropy subgroups act trivially
on the fibers of ${\mathcal S}$.

Let $h$ be any $G$-invariant nondegenerate symmetric bilinear form on
$\mathfrak g$. The restriction of $h$ to the centralizer, in $\mathfrak 
g$, of any semisimple element of $G$ is known to be nondegenerate. From 
this it follows that the bilinear form induced by $h$ on the vector 
bundle $\mathcal S$ in \eqref{q} is nondegenerate. Since $h$ 
is $G$-invariant, from \eqref{q} we 
conclude that this nondegenerate bilinear form on $\mathcal S$ descends 
to a nondegenerate bilinear form on ${\mathcal E}$. This implies
that ${\mathcal E}^\ast\, =\, {\mathcal E}$, in particular, $\deg
({\mathcal E})\,=\,0$ with respect to any polarization on $D$.

Recall that
the fibers of $\mathcal K$ are identified with $\mathfrak g$.
Using this Lie algebra structure of the fibers of $\mathcal K$,
the Higgs field $\theta$ defines a homomorphism
$$
{\mathcal K}\, \longrightarrow\, {\mathcal K}\otimes {\mathcal 
Q}^\ast\, \longrightarrow\, 0
$$
of vector bundles. On the other hand, over $\widetilde D$,
we have a natural restriction homomorphism
$$
{\mathcal Q}^\ast\vert_{\widetilde D}\, \longrightarrow\, 
\Omega^1_{\widetilde D}
$$
of vector bundles. Combining these two homomorphisms, we have a
homomorphism of vector bundles
$$
\beta\, :\, {\mathcal K}\vert_{\widetilde D}\, \longrightarrow\, 
{\mathcal K}\vert_{\widetilde D}\otimes \Omega^1_{\widetilde D}\, .
$$
The group $G$ acts on both ${\mathcal K}\vert_{\widetilde D}$
and $\Omega^1_{\widetilde D}$. The above homomorphism $\beta$
commutes with the actions of $G$. Therefore, $\beta$ produces
a homomorphism
\begin{equation}\label{thp}
\theta'\, :\, {\mathcal E} \, =\, (\widehat{\psi}_*{\mathcal K})^G
\, \longrightarrow\, ({\mathcal K}\vert_{\widetilde D}\otimes 
\Omega^1_{\widetilde D})^G \,=\, {\mathcal E}\otimes\Omega^1_D\, ,
\end{equation}
where $\mathcal E$ is defined in \eqref{cE}. From
the condition $\theta \bigwedge \theta = 0$ (see Definition
\ref{de5}) it follows that $\theta'$ is a Higgs field on the
vector bundle ${\mathcal E}$.

Consider the adjoint parabolic vector bundle $\text{ad}(E_G)$ for the 
ramified $G$-bundle $E_G$. The Higgs field $\theta$ produces a Higgs
field on the parabolic vector bundle $\text{ad}(E_G)$. This
induced Higgs field on $\text{ad}(E_G)$ will be denoted by
$\text{ad}(\theta)$.

\begin{theorem} \label{result}
Let $(E_G, \theta)$ be a parabolic Higgs $G$-bundle on $X$ 
such that $(E_G, \theta)$ is polystable with respect to the
K\"ahler form $\omega_0$ (see \eqref{om}), and satisfies the following 
two conditions:
\begin{itemize}
\item The Higgs bundle $({\mathcal E}\, ,\theta')$ constructed
in \eqref{cE} and \eqref{thp} is polystable, and

\item for the graded pieces $(\Gr_\alpha \ad(E_G)\, , {\rm 
ad}(\theta)_\alpha)$ of $(\ad(E_G)\vert_D\, ,{\rm ad}(\theta)\vert_D)$,
the condition
\begin{equation} \label{graded slope}
  \mu(\Gr_\alpha \ad(E_G)) = - \alpha \deg(N)
\end{equation}
holds, where degrees are computed using $\omega_0$ and $N$
is the normal bundle of $D$.
\end{itemize}
Then there is a Hermitian-Einstein connection on $E_G$ over
$X \setminus D$ with respect to the Poincar\'e-type metric
described in Section \ref{sec3}.
\end{theorem}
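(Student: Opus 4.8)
The plan is to reduce the assertion to Biquard's theorem (Theorem \ref{biquard}) applied to the adjoint parabolic Higgs vector bundle $(\ad(E_G), \ad(\theta))$, and then to promote the Hermitian--Einstein metric produced on $\ad(E_G)$ to a Hermitian--Einstein connection on the principal bundle $E_G$ itself. The passage from the vector bundle result to the principal bundle result is where the two hypotheses of the theorem, together with the polystability of $(E_G,\theta)$, will be consumed, and where the uniqueness clause of Theorem \ref{biquard} plays a decisive role.

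First I would check that $(\ad(E_G), \ad(\theta))$ satisfies the numerical hypotheses of Theorem \ref{biquard}. Since $G$ is reductive, a $G$--invariant nondegenerate symmetric form $h$ on $\mathfrak g$ identifies $\ad(E_G)$ with its own parabolic dual; by exactly the argument used for $\mathcal E$ in the discussion preceding \eqref{thp}, this forces $\parmu(\ad(E_G)) = 0$. Hence the slope condition \eqref{graded slope (biquard)} that Biquard requires reduces precisely to the hypothesis \eqref{graded slope}. For the polystability of the graded pieces $(\Gr_\alpha \ad(E_G), \ad(\theta)_\alpha)$, the zero--weight piece is canonically $(\mathcal E, \theta')$ of \eqref{cE}--\eqref{thp}, whose fibre over $z$ is $\mathfrak g^{G_z} = \mathfrak g_z$; this is polystable by assumption. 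The remaining pieces correspond to the nonzero $G_z$--isotypic summands of $\mathfrak g$: the form $h$ pairs $\Gr_\alpha$ with $\Gr_{1-\alpha}$, and each such piece is a bundle over $D$ associated to a $\mathfrak g_z$--module, so its polystability follows from that of $\mathcal E$ by the associated--bundle principle together with $h$--duality.

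Biquard's theorem is stated for \emph{stable} bundles, whereas $(\ad(E_G), \ad(\theta))$ is only polystable. To close this gap I would invoke the polystability of $(E_G,\theta)$: by Definition \ref{de-ps}, either $(E_G,\theta)$ is stable, or it reduces to a Levi subgroup $L(H)$ over which it is stable with admissible extension. In both cases the decomposition of $\mathfrak g$ as an $L(H)$--module yields a $\theta$--invariant, $h$--orthogonal splitting of $(\ad(E_G),\ad(\theta))$ into stable parabolic Higgs subbundles, each of parabolic slope $0$. Applying Theorem \ref{biquard} to each summand and forming the orthogonal direct sum of the resulting metrics produces a Hermitian--Einstein metric on $\ad(E_G)$, with Chern connection in $\A$, with respect to the Poincar\'e--type metric $\omega$.

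The final step is to show that this metric on $\ad(E_G)$ descends from a smooth reduction $E_K$ of $E_G$ to a maximal compact $K \subset G$, so that the induced connection is the sought Hermitian--Einstein connection. Here the uniqueness part of Theorem \ref{biquard} is essential: since the Hermitian--Einstein metric in $\A$ is unique up to a positive scalar, it must be preserved by the fibrewise Lie bracket and by $h$ (otherwise transporting it by an automorphism would yield a second such metric of the same slope). Compatibility with the bracket and with $h$ is exactly the condition that the metric arises from a $K$--reduction, whose Cartan involution $\phi$ converts the scalar equation of Theorem \ref{biquard} into the equation $\Lambda_\omega(F + [\theta,\theta^\ast]) = \lambda$ of Definition \ref{dhe}. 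I expect this descent---establishing the bracket-- and $h$--compatibility of the metric via uniqueness, and controlling it within the analytic space $\A$---to be the main obstacle, with the polystability of the nonzero--weight graded pieces being the principal algebraic point to verify.
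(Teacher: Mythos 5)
Your skeleton---apply Theorem \ref{biquard} to $(\ad(E_G),\ad(\theta))$ and then promote the resulting Hermitian-Einstein structure to a connection on $E_G$---is indeed the paper's strategy, and your handling of the numerical hypotheses ($\parmu(\ad(E_G))=0$ via an invariant form, $\Gr_0\ad(E_G)\cong{\mathcal E}$, polystability of the other graded pieces by an associated-bundle principle) is broadly in line with the paper. But the descent step, which you yourself flag as the main obstacle, contains two genuine gaps. First, for reductive $G$ with positive-dimensional center the descent is impossible as you have set it up: $Z_0(G)$ acts trivially on $\mathfrak g$, so $\ad(E_G)$ carries no information about the central directions of a connection on $E_G$, and no compatibility of a metric on $\ad(E_G)$ with the bracket and $h$ can reconstruct a connection on $E_G$. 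The paper first reduces to $(E_G,\theta)$ stable via the Levi reduction, and then to $G$ \emph{semisimple} via the isogenies $Z_0(G)\times[G,G]\to G\to (G/Z_0(G))\times(G/[G,G])$ (which identify connections because the Lie algebra maps are isomorphisms), handling the torus quotient directly by Theorem \ref{biquard}. Only for semisimple $G$ is it true that a connection on $\ad(E_G)$ is induced from $E_G$ precisely when the bracket tensor is parallel, since then the Lie algebra of bracket-preserving automorphisms of $\mathfrak g$ equals $\mathfrak g$. Your proposal never performs this reduction.

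Second, your mechanism for bracket-compatibility---uniqueness up to scalar plus ``transporting by an automorphism''---does not work: the bracket is a holomorphic \emph{section} $\Phi$ of $(\ad(E_G)\tensor\ad(E_G))^\ast\tensor\ad(E_G)$, not an automorphism, so there is nothing to transport the metric by. (The transport-and-uniqueness trick does yield $h$-compatibility, since $h$ is an isomorphism onto the dual, but it cannot yield parallelism of $\Phi$.) What is actually required is the statement that a holomorphic section annihilated by the Higgs field, in a polystable parabolic Higgs bundle of parabolic degree zero carrying a Hermitian-Einstein connection, is parallel---the paper's Lemma \ref{lem1}, applied to $\Phi$, which is annihilated by the induced Higgs field because $\ad(\theta)$ acts by derivations of the bracket. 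In the noncompact Poincar\'e-metric setting this lemma is itself nontrivial: the paper proves it by passing to the compact Galois covering $Y$, where the corresponding $\Gamma$-polystable Higgs bundle has a Hermitian-Einstein connection by Simpson, invoking the flatness result of \cite{BS}, splitting off the line subbundle generated by the section, descending the splitting, and only then using the uniqueness clause of Theorem \ref{biquard}. Relatedly, the paper establishes polystability of $(\ad(E_G),\ad(\theta))$ by showing $(F_G,\widetilde\theta)$ is $\Gamma$-stable on the covering and applying the Anchouche--Biswas theorem there; your Levi-splitting route produces only polystable isotypic summands rather than stable ones, though that defect is patchable, unlike the two gaps above.
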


\begin{proof}
We first note that it is enough to prove the theorem
under the stronger assumption that the parabolic Higgs $G$-bundle
$(E_G\, ,\theta)$ is stable. Indeed, a polystable
parabolic Higgs $G$-bundle $(E_G\, ,\theta)$ admits a reduction 
of structure group $E_{L(P)}\, \subset\, E_G$ to a Levi subgroup 
$L(P)$ of some parabolic subgroup $P$ of $G$
such that the corresponding parabolic Higgs $L(P)$-bundle
$(E_{L(P)}\, ,\theta)$ is stable (see Definition \ref{de-ps}). 
The connection on $E_G$ induced by a Hermitian-Einstein 
connection on $E_{L(P)}$ is again Hermitian-Einstein. Hence
it suffices to prove the theorem for $(E_G\, ,\theta)$ stable.

Henceforth, in the proof we assume that $(E_G\, ,\theta)$ is stable.

We will now show that it is enough to prove the theorem
under the assumption that $G$ is semisimple.

As before, $Z_0(G)\, \subset\, G$ is the connected component,
containing the identity element, of the center of $G$. The
normal subgroup $[G\, ,G]\, \subset\, G$ is semisimple, because
$G$ is reductive. We have natural homomorphisms
$$
Z_0(G)\times [G\, ,G]\, \longrightarrow\, G \, \longrightarrow
\, (G/Z_0(G))\times (G/[G\, ,G])\, .
$$
Both the homomorphisms are surjective with finite kernel. In
particular, both the homomorphisms of Lie algebras are 
isomorphisms. Let $\rho\, :\, A\, \longrightarrow\, B$ be
a homomorphism of Lie groups such that the corresponding 
homomorphism of Lie algebras is an isomorphism, let $E_A$
be a principal $A$-bundle, and let $E_B\, :=\, E_A\times^\rho B$
be the principal $B$-bundle obtained by extending the structure
group of $E_A$ using $\rho$. Then there is a natural bijective
correspondence between the connections on $E_A$ and the 
connections on $E_B$. The curvature of a connection on $E_B$
is given by the curvature of the corresponding connection
on $E_A$ using the homomorphism of Lie algebras associated
to $\rho$. Therefore, to prove the theorem for $G$, it 
is enough to prove it for $G/Z_0(G)$ and $G/[G\, ,G]$ separately. 
But $G/[G\, ,G]$ is a product of copies of ${\mathbb C}^*$, hence
in this case the theorem follows immediately from Theorem 
\ref{biquard}. The group $G/Z_0(G)$ is semisimple. Hence it is
enough to prove the theorem under the assumption that $G$ is
semisimple.

Henceforth, in the proof we assume that $G$ is semisimple.

Denote by $\eta: Y \longrightarrow X$ the Galois covering with Galois 
group $\Gamma := \Gal(\eta)$ and by $F_G$ the $\Gamma$-linearized 
$G$-bundle on $Y$ corresponding to $E_G$ as described in Section 
\ref{sec2} . According 
to \cite[Proposition 4.1]{Bi08}, the Higgs field $\theta$ on $E_G$ 
corresponds to a $\Gamma$-invariant Higgs field $\widetilde \theta$ on 
$F_G$. This induces a $\Gamma$-invariant Higgs field $\ad(\widetilde 
\theta)$ on the $\Gamma$-linearized vector bundle $\ad(F_G)$. By 
\cite[Theorem 5.5]{Bi97a}, this in turn corresponds to a Higgs field 
$\ad(\theta)$ on the parabolic vector bundle $\ad(E_G)$. This way 
we construct
the parabolic Higgs vector bundle $(\ad(E_G), \ad(\theta))$ on $X$ 
defined earlier.

The strategy of the proof is to show that the hypotheses of Biquard's 
Theorem \ref{biquard} are satisfied for $(\ad(E_G), \ad(\theta))$ 
and that the 
resulting Hermitian-Einstein connection on $\ad(E_G)|_{X \setminus D}$ is induced by a Hermitian-Einstein connection on $E_G|_{X \setminus D}$.

First we show that $(\ad(E_G), \ad(\theta))$ is parabolic polystable. 
Since $(E_G, \theta)$ is stable by hypothesis, it follows as in 
\cite[Lemma 4.2]{Bi08} that $(F_G, \widetilde{\theta})$ is 
$\Gamma$-stable. In \cite{AB01} it was shown that if
a principal $G$-bundle $E^1_G$ is stable, then its adjoint
vector bundle $\text{ad}(E^1_G)$ is polystable (see
\cite[p.\ 212, Theorem 2.6]{AB01}). The proof in \cite{AB01}
goes through if $(E^1_G, \theta^1)$ is $\Gamma$-stable, and gives 
that $(\text{ad}(E^1_G), \text{ad}(\theta^1))$ is 
$\Gamma$-polystable. 
Since the proof goes through verbatim with obvious modifications
due to the Higgs field, we refrain from repeating the proof.
Therefore, we have $(\ad(F_G), \text{ad}(\widetilde{\theta}))$ to 
be $\Gamma$-polystable.

Since $(\ad(F_G)\, , \text{ad}(\widetilde{\theta}))$ is 
$\Gamma$-polystable, the parabolic Higgs vector bundle
$$(\ad(E_G)\, ,\ad(\theta))$$ is parabolic polystable
(see \cite[p.\ 611, Theorem 5.5]{Bi97a}).

Let $M$ be a reductive complex linear algebraic group. The
connected component, containing the identity element, of the center of 
$M$ will be denoted by $Z_0(M)$. Let $(E_M\, ,
\theta_M)$ be a polystable principal Higgs $M$-bundle on a 
connected complex projective manifold. If $V$ is a complex $M$-module
such that $Z_0(M)$ acts on $V$ as scalar multiplications through a
character of $Z_0(M)$, then it is known that the associated Higgs vector 
bundle $(E_M\times^M V\, ,\theta_V)$ is polystable,
where $\theta_V$ is the Higgs field on the associated vector 
bundle $E_M\times^M V$ defined by
$\theta_M$. Indeed, this follows immediately from the fact that
$(E_M\, , \theta_M)$ has a Hermitian-Einstein connection; note 
that the connection on $(E_M\times^M V\, ,\theta_V)$ induced by
a Hermitian-Einstein connection on $(E_M\, , \theta_M)$ is also
Hermitian-Einstein, provided the above condition for the action
of $Z_0(M)$ on $V$ holds. (See \cite[p.\ 227, Theorem 4.10]{AB01}
for the Hermitian-Einstein connection on $(E_M\, , \theta_M)$.)

Since the Higgs bundle $({\mathcal E}\, ,\theta')$ constructed
in \eqref{cE} and \eqref{thp} is given to be polystable, from the
above observation it follows that each of the graded pieces 
$(\Gr_\alpha \ad(E_G), \ad(\theta)_\alpha)$ of $(\ad(E_G)\, 
,\ad(\theta))$ is polystable.

Since $G$ is semisimple, the Killing form on its Lie algebra 
$\mathfrak{g}$ is nondegenerate and thus induces an isomorphism 
$\ad(F_G) \simeq \ad(F_G)^\ast$. This implies that 
$\deg(\ad(F_G)) = 0$. By \cite[p.\ 318, (3.12)]{Bi97b}, we have
\[
  \# \Gamma \cdot \pardeg(\ad(E_G)) = \deg(\ad(F_G))\, ,
\]
and thus $\pardeg(\ad(E_G))\,=\,0$, or equivalently, 
$\parmu(\ad(E_G)) \,=\, 0$. Consequently, the hypothesis 
\eqref{graded slope} on the slopes of the graded pieces implies 
that the condition \eqref{graded slope (biquard)} in Theorem 
\ref{biquard} holds for the bundle $\ad(E_G)$. Therefore, 
we obtain from Theorem \ref{biquard} a Hermitian-Einstein metric 
on $\ad(E_G)$ over $X\setminus D$ with respect to the 
Poincar\'e-type metric.

Finally we have to show that the corresponding 
Hermitian-Einstein connection on $\ad(E_G)$ is induced by a 
connection on the 
principal Higgs $G$-bundle $E_G|_{X \setminus 
D}$; we note that if $\nabla$ is a connection on $E_G|_{X 
\setminus D}$ inducing the Hermitian-Einstein connection on 
$\ad(E_G)$, then $\nabla$ is automatically Hermitian-Einstein.

Let
$$
\Phi\, \in\, H^0(X \setminus D,\, (\ad(E_G) \tensor 
\ad(E_G))^\ast \tensor \ad(E_G))
$$
be the section defining the Lie bracket operation on $\ad(E_G)$.
It can be shown that a connection $\nabla_{\rm ad}$ on
$\ad(E_G)\vert_{X \setminus D}$ is induced by a connection
of $E_G|_{X \setminus D}$ if and only if $\Phi$ is parallel
with respect to the connection on $(\ad(E_G) \tensor
\ad(E_G))^\ast \tensor \ad(E_G)$ induced by $\nabla_{\rm ad}$.
Indeed, this follows from the fact that $G$ being semisimple 
the Lie algebra of the group of Lie algebra preserving 
automorphisms of $\mathfrak g$ coincides with $\mathfrak g$ (see 
proof of Theorem 3.7 of \cite{AB01}).

Therefore, to complete the proof of the theorem it suffices
to show that $\Phi$ is parallel with respect to the connection
on  $(\ad(E_G) \tensor\ad(E_G))^\ast \tensor \ad(E_G)$ induced 
by a Hermitian-Einstein connection on $\ad(E_G)$.

Since $\pardeg(\ad(E_G)) \,=\, 0$, it follows that
$$
\pardeg((\ad(E_G) \tensor \ad(E_G))^\ast \tensor \ad(E_G)) \,=\, 
0\, .
$$
The connection on $(\ad(E_G) 
\tensor \ad(E_G))^\ast \tensor \ad(E_G)$ induced by
the Hermitian-Einstein connection on $\ad(E_G)$ is also a 
Hermitian-Einstein connection. Since the Higgs field 
$\ad(\theta)$ is induced by the Higgs field $\theta$ on $E_G$, it 
follows that $\Phi$ is annihilated by the induced Higgs field on 
$(\ad(E_G) \tensor\ad(E_G))^\ast \tensor \ad(E_G)$. Thus the 
proof of Theorem \ref{result} is completed by Lemma \ref{lem1}.
\end{proof}

\begin{lemma}\label{lem1}
Let $(E\, , \theta)$ be a parabolic Higgs vector bundle on $X \setminus D$ 
admitting a Hermitian-Einstein connection $\nabla$ with respect to the 
Poincar\'e-type metric. Assume that $(E, \theta)$ is polystable, and
$\pardeg E\,=\, 0$. Let $s$ be a holomorphic section of $E$ such that 
$\theta(s) \,=\, 0$. Then $s$ is parallel with respect to $\nabla$.
\end{lemma}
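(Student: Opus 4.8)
The plan is to use the standard Bochner--Kodaira technique adapted to the Higgs setting. The key observation is that a holomorphic section $s$ of a Hermitian-Einstein Higgs bundle, which is additionally annihilated by the Higgs field $\theta$, should be covariantly constant because the Hermitian-Einstein condition forces the relevant Laplacian of the pointwise norm $\|s\|^2$ to have a definite sign. First I would compute the $\dbar\partial$ of the function $f := \|s\|^2 = \langle s, s\rangle_h$, where $h$ is the Hermitian-Einstein metric and $\langle \cdot, \cdot\rangle_h$ the induced inner product on $E$. Writing $\nabla = \nabla^{1,0} + \dbar$ for the Chern connection, holomorphicity of $s$ gives $\dbar s = 0$, so that $\partial f = \langle \nabla^{1,0} s, s\rangle$ and hence a Weitzenb\"ock-type identity relates $\sqrt{-1}\,\Lambda_\omega \partial\dbar f$ to $|\nabla^{1,0} s|^2$ and a curvature term $\langle \Lambda_\omega F\,(s), s\rangle$. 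The hypothesis $\theta(s) = 0$ is what lets me incorporate the Higgs term: together with the Hermitian-Einstein equation $\sqrt{-1}\,\Lambda_\omega(F + [\theta, \theta^\ast]) = \lambda \cdot \mathrm{id}_E$, the curvature contribution becomes $\lambda \|s\|^2$ minus a term built from $\theta^\ast(s)$, and the condition $\theta(s)=0$ makes the relevant piece nonnegative after pairing.

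The next step is to pin down the constant $\lambda$. Since $\pardeg E = 0$ and the Hermitian-Einstein factor $\lambda$ is proportional to the parabolic degree (equivalently to $\parmu(E)$), the integrated Hermitian-Einstein condition against $\omega^n$ forces $\lambda = 0$. This is the place where the assumption $\pardeg E = 0$ is essential. With $\lambda = 0$, the Weitzenb\"ock identity collapses to an inequality of the schematic form $\sqrt{-1}\,\Lambda_\omega \partial\dbar \|s\|^2 \geq |\nabla^{1,0} s|^2 + (\text{nonnegative Higgs term})$, so the function $\|s\|^2$ is subharmonic on $X \setminus D$ with respect to the Poincar\'e-type metric $\omega$.

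I would then invoke a maximum principle. Because $\omega$ is a complete metric of Poincar\'e type on $X\setminus D$ and $\|s\|^2$ is a bounded subharmonic function (boundedness coming from the controlled behavior of $h$ near $D$, i.e.\ from $s$ being a section of the parabolic bundle and the connection lying in the admissible space $\A$), the function must be constant by the Yau--Omori maximum principle for complete manifolds, or alternatively by integrating $\sqrt{-1}\,\Lambda_\omega \partial\dbar \|s\|^2$ against $\omega^n$ and using that the total integral vanishes. Constancy of $\|s\|^2$ then forces the nonnegative right-hand side to vanish identically, and in particular $\nabla^{1,0} s = 0$; combined with $\dbar s = 0$ this gives $\nabla s = 0$, i.e.\ $s$ is parallel.

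The main obstacle I expect is the analytic justification of the maximum principle in the noncompact, incomplete-at-first-glance setting: one must verify that $\|s\|^2$ is genuinely bounded and that the boundary contributions near $D$ vanish, which is exactly where the hypotheses that the connection lies in $\A$ and that $\omega$ is the Poincar\'e-type metric do the work. The purely algebraic Bochner computation is routine; controlling the growth of $s$ and $\|s\|^2$ near the divisor $D$, so that integration by parts produces no boundary term and the completeness argument applies, is the delicate point.
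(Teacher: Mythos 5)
Your Bochner computation is fine as far as it goes: with $\theta(s)=0$ the Higgs term $\sqrt{-1}\Lambda_\omega[\theta,\theta^\ast]$ does contribute with the favorable sign, and this is exactly the identity behind the compact-case statement the paper relies on (\cite[Lemma 3.4]{BS}). The genuine gap is at your maximum-principle step. On a complete noncompact manifold a bounded subharmonic function need \emph{not} be constant --- the hyperbolic plane already carries bounded nonconstant harmonic functions, and the Poincar\'e-type metric is modeled on precisely that cusp geometry near $D$ --- so the Omori--Yau maximum principle does not deliver constancy of $\|s\|^2$. Your fallback, integrating $\sqrt{-1}\Lambda_\omega\partial\dbar\|s\|^2$ against $\omega^n$, requires a Gaffney/cutoff-type Stokes argument, which in turn needs decay of $\langle\nabla^{1,0}s,s\rangle$ near $D$ (say $L^1$ control of the gradient); supplying those weighted estimates is the entire analytic content of the problem, and your proposal only flags it as ``delicate'' without an argument. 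Even boundedness of $\|s\|^2$ is not free: in Biquard's adapted metric a holomorphic section picks up factors $\|\sigma\|^{2\alpha}$ and logarithmic terms, so one must check compatibility of $s$ with the parabolic filtration at weight zero. A telling symptom is that your argument never uses the polystability hypothesis, which appears in the statement and is used essentially in the paper's proof.

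The paper sidesteps the noncompact analysis entirely. It passes to the compact Galois cover $Y$, where $(E,\theta)$ corresponds to a $\Gamma$-linearized Higgs bundle $(V,\varphi)$: there Simpson's theorem \cite{Si88} provides a Hermitian-Einstein connection, the compact Bochner argument of \cite{BS} (your computation, but on a compact manifold where the maximum principle is immediate) makes the lifted section $\widetilde{s}$ flat and hence nowhere vanishing, and $\Gamma$-polystability then splits off the line subbundle it generates, $(V,\varphi)=(V',\varphi')\oplus(L^{\widetilde{s}},0)$. This splitting descends to $(E,\theta)=(E',\theta')\oplus(L^s,0)$ on $X$, and the uniqueness assertion in Biquard's Theorem \ref{biquard} identifies the given connection $\nabla$ with the direct sum of Hermitian-Einstein connections, under which $s$ is manifestly parallel. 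Polystability enters twice (existence upstairs, and the splitting), and uniqueness replaces the hard estimate. To rescue your direct approach you would have to prove the weighted decay of $\nabla^{1,0}s$ in the H\"older spaces of \cite{Bq97} --- essentially redoing a substantial piece of that paper --- which is far more work than the descent-and-uniqueness argument actually used.
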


\begin{proof}
Fix a Galois covering $\eta\, :\, Y\, \longrightarrow\, X$ such that
there is a $\Gamma$-linearized Higgs vector bundle $(V\, ,\varphi)$ on
$Y$ that corresponds to $(E, \theta)$, where $\Gamma\,=\, \text{Gal}(\eta)$.
Fix the polarization $\eta^*\zeta$ on $Y$, where $\zeta$ is the polarization
on $X$.

We know that $(V\, ,\varphi)$ is $\Gamma$-polystable because $(E, 
\theta)$ is polystable. Therefore, $(V\, ,\varphi)$ admits 
a Hermitian-Einstein connection \cite[p.\ 978, Theorem 1]{Si88}.

Let $\widetilde{s}$ be the holomorphic section of $V$ 
over $Y$ given by $\theta$. We
note that $\varphi( \widetilde{s})\,=\, 0$ because
$\theta(s) \,=\, 0$.
We have $\deg V\,=\, 0$ because $\pardeg E\,=\, 0$ \cite[p.\ 318, 
(3.12)]{Bi97b}. Since $(V\, ,\varphi)$ admits 
a Hermitian-Einstein connection with $\deg V\,=\, 0$,
and $\varphi( \widetilde{s})\,=\, 0$, it follows that the
holomorphic section $\widetilde{s}$ is flat with respect to the
Hermitian-Einstein connection on $(V\, ,\varphi)$ \cite[p.\ 548,
Lemma 3.4]{BS}.

If $s$ vanishes identically, then the lemma is obvious.
Assume that $s$ does not vanish identically. Since
$\widetilde{s}$ is flat with respect to the
Hermitian-Einstein connection on $(V\, ,\varphi)$, the
section $\widetilde{s}$ does not vanish at any point of $Y$.
Let $L^{\widetilde{s}}\, \subset\, V$ be the holomorphic line 
subbundle generated by $\widetilde{s}$. The action of $\Gamma$ on 
$V$ clearly preserves $L^{\widetilde{s}}$. Since
$(V\, ,\varphi)$ is $\Gamma$-polystable, this implies that
there is  a $\Gamma$-polystable Higgs vector bundle
$(V'\, ,\varphi')$ such that
$$
(V\, ,\varphi)\,=\, (V'\, ,\varphi')\, \oplus (L^{\widetilde{s}}\, ,0)
$$
as $\Gamma$-linearized Higgs vector bundles.

The above decomposition of the $\Gamma$-linearized Higgs vector bundle $(V\, 
,\varphi)$ produces a decomposition
$$
(E\, ,\theta)\,=\, (E'\, ,\theta')\, \oplus (L^s\, ,0)
$$
of the parabolic Higgs vector bundle; the line subbundle $L^s$ of $E$ is
generated by $s$.

The direct sum of the Hermitian-Einstein connections on $(E'\, ,\theta')$ and
$(L^s\, ,0)$ is a Hermitian-Einstein connection on $(E\, ,\theta)$.
Therefore, from the uniqueness of the Her\-mitian-Einstein 
connection (see the second part of
Theorem \ref{biquard}) it follows immediately that 
$s$ is parallel with respect to the Hermitian-Einstein 
connection $\nabla$.
\end{proof}

There is also a converse to Theorem \ref{result}:

\begin{proposition} \label{converse}
Let $(E_G, \theta)$ be a parabolic Higgs $G$-bundle on $X$. Suppose there is 
a Hermitian-Einstein connection on $E_G$ over $X \setminus D$ with respect to 
the Poincar\'e-type metric $\omega$ such that the induced connection on the 
adjoint vector bundle $\ad(E_G)|_{X \setminus D}$ lies in the space $\A$ (see 
\eqref{space of connections}). Then $(E_G, \theta)$ is polystable with 
respect to $\omega_0$.
\end{proposition}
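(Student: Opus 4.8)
The plan is to reverse the strategy of the proof of Theorem \ref{result}, again passing through the adjoint parabolic Higgs bundle. As a preliminary reduction I would repeat the decomposition $Z_0(G)\times[G\, ,G]\longrightarrow G\longrightarrow (G/Z_0(G))\times(G/[G\, ,G])$ used there. Since both maps induce isomorphisms of Lie algebras, the given Hermitian-Einstein connection on $E_G$ induces Hermitian-Einstein connections on the extensions of structure group to the semisimple group $G/Z_0(G)$ and to the torus $G/[G\, ,G]\cong(\CC^\ast)^k$. Reductions to parabolic subgroups and the parabolic degrees relevant to Definition \ref{de-ps} are unaffected by these isogenies, and the torus factor has no proper parabolic subgroup and so is vacuously polystable; hence polystability of $E_G$ is equivalent to that of the semisimple factor, and I may assume from now on that $G$ is semisimple.

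With $G$ semisimple, the given connection on $E_G|_{X\setminus D}$ induces a Hermitian-Einstein connection on $\ad(E_G)|_{X\setminus D}$ with respect to $\ad(\theta)$, and by hypothesis this induced connection lies in $\A$. As in the proof of Theorem \ref{result}, the Killing form gives $\ad(F_G)\cong\ad(F_G)^\ast$, so $\pardeg(\ad(E_G))=0$ and $\parmu(\ad(E_G))=0$. I would then invoke the easy, Chern-Weil, direction of the Hitchin-Kobayashi correspondence in Biquard's framework: for any $\ad(\theta)$-invariant saturated subsheaf $V\subset\ad(E_G)$ one compares $\pardeg(V)$, computed with respect to $\omega_0$, against the mean-curvature integral of the Poincar\'e-metric connection. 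The Hermitian-Einstein equation reduces the curvature contribution to a multiple of $\parmu(\ad(E_G))=0$, while the second fundamental form of $V$ contributes a non-negative term; this gives $\pardeg(V)\leq 0$, with equality forcing $V$ to split off as an $\ad(\theta)$-invariant orthogonal summand. The membership of the connection in $\A$ is exactly the decay needed to make these integrals converge on the noncompact $X\setminus D$, to kill the boundary terms, and to identify the Poincar\'e-metric Chern-Weil integrals with parabolic degrees taken with respect to $\omega_0$. The conclusion is that $(\ad(E_G)\, ,\ad(\theta))$ is parabolic polystable.

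It remains to deduce polystability of $(E_G\, ,\theta)$ from that of its adjoint; this is the step where semisimplicity is essential and is the main obstacle. I would transfer the question to the Galois covering $\eta\,:\,Y\longrightarrow X$ of Section \ref{sec2}. By \cite[p.\ 611, Theorem 5.5]{Bi97a}, parabolic polystability of $(\ad(E_G)\, ,\ad(\theta))$ is equivalent to $\Gamma$-polystability of $(\ad(F_G)\, ,\ad(\widetilde\theta))$ on the compact manifold $Y$, and likewise polystability of $(E_G\, ,\theta)$ is equivalent to $\Gamma$-polystability of $(F_G\, ,\widetilde\theta)$. On $Y$ the $\Gamma$-polystable Higgs bundle $(\ad(F_G)\, ,\ad(\widetilde\theta))$ carries a $\Gamma$-invariant Hermitian-Einstein connection \cite[p.\ 978, Theorem 1]{Si88}. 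Exactly as at the end of the proof of Theorem \ref{result}, the bracket tensor $\Phi$ is a global holomorphic section of the degree-zero Higgs bundle $(\ad(F_G)\tensor\ad(F_G))^\ast\tensor\ad(F_G)$; it is annihilated by the induced Higgs field and the induced ($\Gamma$-invariant) connection on this bundle is again Hermitian-Einstein, so by the argument of Lemma \ref{lem1} (via \cite[p.\ 548, Lemma 3.4]{BS}) the section $\Phi$ is parallel. Since $G$ is semisimple, parallelism of $\Phi$ forces this connection to be induced by a $\Gamma$-invariant connection on $F_G$, which is then Hermitian-Einstein; the easy direction of the Hitchin-Kobayashi correspondence for principal Higgs bundles on the compact $Y$ (cf.\ \cite{AB01}, in the $\Gamma$-equivariant Higgs form) then shows $(F_G\, ,\widetilde\theta)$ to be $\Gamma$-polystable, whence $(E_G\, ,\theta)$ is polystable with respect to $\omega_0$.

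The hard part is making this last transfer rigorous in the $\Gamma$-equivariant Higgs setting: one must verify that the $\Phi$-parallel descent of connections from $\ad(F_G)$ to $F_G$, used in the forward direction, is compatible with the $\Gamma$-linearization, and that the easy direction of the Hitchin-Kobayashi correspondence for principal bundles holds equivariantly on $Y$. An alternative that stays on $X\setminus D$ would be to test polystability of $(E_G\, ,\theta)$ directly against reductions $E_H$ to parabolic subgroups $H$ compatible with $\theta$, computing $\pardeg(E_H(\chi))$ by Chern-Weil for a strictly antidominant character $\chi$ trivial on $Z_0(G)$: the Hermitian-Einstein equation again annihilates the curvature term while the second fundamental form leaves a non-negative contribution, giving $\pardeg(E_H(\chi))\geq 0$, with the equality case producing the admissible Levi reduction of Definition \ref{de-ps}. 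In either route the analytic control furnished by the space $\A$ is what legitimizes the Chern-Weil integrals.
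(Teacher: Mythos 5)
Your argument reaches the right conclusion, but it takes a genuinely different and much heavier route than the paper --- and, somewhat ironically, the ``alternative that stays on $X\setminus D$'' which you sketch in your closing paragraph \emph{is} the paper's actual proof. The paper disposes of Proposition \ref{converse} in three lines: by \cite[Proposition 7.2]{Bq97}, the parabolic degree of a parabolic sheaf with respect to $\omega_0$ equals the degree of its restriction to $X\setminus D$ computed with respect to $\omega$ using a Hermitian metric whose Chern connection lies in $\A$; granting this, the Chern--Weil argument of Ramanathan--Subramanian \cite[pp.\ 28--29]{RS88} for ordinary principal bundles --- testing $\pardeg(E_H(\chi))$ directly against reductions $E_H$ to parabolic subgroups compatible with $\theta$ --- goes through verbatim. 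In contrast, your main route runs the forward machinery of Theorem \ref{result} in reverse: reduce to semisimple $G$, deduce parabolic polystability of $(\ad(E_G),\ad(\theta))$ by Chern--Weil, pass to the Galois cover $Y$, invoke Simpson \cite{Si88} to get a fresh Hermitian--Einstein connection on $\ad(F_G)$ over the compact $Y$, descend it to $F_G$ via parallelism of the bracket tensor $\Phi$ (Lemma \ref{lem1} / \cite[Lemma 3.4]{BS}), and conclude by the easy direction of Hitchin--Kobayashi on $Y$. This can be made to work, and it has the virtue of reducing everything to the vector-bundle case plus known compact results; but it is longer and leans on several transfer statements you correctly flag as needing verification --- the equivalences of (poly)stability across the cover in \emph{both} directions (the paper only uses and cites one direction of \cite[Theorem 5.5]{Bi97a} and of \cite[Lemma 4.2]{Bi08}), the $\Gamma$-invariance of Simpson's connection, the implication ``adjoint polystable $\Rightarrow$ principal polystable'', and the equality-case splitting in your noncompact Chern--Weil step, whose extension across $D$ as a parabolic direct summand is delicate. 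The paper's route avoids the adjoint bundle and the cover entirely (and needs no reduction to semisimple $G$, since the \cite{RS88} argument works for reductive $G$ with characters trivial on $Z_0(G)$), at the price of the single analytic input \cite[Proposition 7.2]{Bq97} --- which is exactly the role you correctly assign to the space $\A$.
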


\begin{proof}
By \cite[Proposition 7.2]{Bq97} we know that the parabolic 
degree of a parabolic sheaf on $X$ with respect to $\omega_0$ coincides with 
the degree of its restriction to $X \setminus D$ with respect to $\omega$, 
computed using a Hermitian metric with Chern connection in $\A$. Thus the 
proof in \cite[pp.\ 28--29]{RS88} of the proposition for ordinary 
principal bundles generalizes to our situation of parabolic Higgs 
$G$-bundles.
\end{proof}

\end{document}